\documentclass[11pt,a4]{article}
\usepackage{amsmath, amsthm, amsfonts, amssymb}
\setlength{\textheight}{240mm}
\setlength{\textwidth}{150mm}
\setlength{\topmargin}{-10mm}
\setlength{\oddsidemargin}{5mm}
\setlength{\evensidemargin}{5mm}

\def\Z{{\mathbb Z}}
\def\R{{\mathbb R}}

\def\E{{\mathbb E}}
\def\P{{\mathbb P}}
\def\Poisson{\mathcal P}

\def\cC{{\mathcal C}}

\def\cN{{\mathcal N}}
\def\cP{{\mathcal P}}

\def\cK{{\mathcal K}}

\def\X{{\mathcal X}}
\def\cX{{\mathcal X}}

\def\bfN{\mathbf N}

\DeclareMathOperator{\supp}{supp}

\DeclareMathOperator{\Var}{Var}
\DeclareMathOperator{\Ex}{{\mathbb E}}
\DeclareMathOperator{\Pois}{{Pois}}

\newtheorem{thm}{Theorem}[section]

\newtheorem{lem}[thm]{Lemma}

\theoremstyle{definition}
\newtheorem{defn}[thm]{Definition}

\theoremstyle{remark}

\title{A remark on the convergence of Betti numbers in the thermodynamic regime}

\author{Trinh Khanh Duy}

\date{}
\begin{document}
\maketitle
\begin{abstract}
	The convergence of the expectations of Betti numbers of \v Cech complexes built on binomial point processes in the thermodynamic regime is established.  
	
	\noindent{\bf Keywords:} \v Cech complex; Betti number; binomial point process; thermodynamic regime;
	
	\noindent{\bf AMS MSC 2010: } 55N05, 60F99
\end{abstract}

\section{Terminologies and main results}
\begin{defn}[\v Cech complex]
	Let $\X = \{x_1, x_2, \dots, x_n\}$ be a collection of points in $\R^d$. The \v Cech complex $\cC(\X,r)$, for $r > 0$, is constructed as follows.
\begin{itemize}
	\item[(i)] The $0$-simplices (vertices) are the points in $\X$.
	\item[(ii)] A $k$-simplex $[x_{i_0}, \dots, x_{i_k}]$ is in $\cC(\X, r)$ if $\bigcap_{j = 0}^k B_{r/2} (x_{i_j}) \ne \emptyset$.
\end{itemize}
Here $B_r(x) = \{y \in \R^d : \|y - x\| \le r\}$ denotes a ball of radius $r$ and center $x$, and $\|x\|$ is the Euclidean norm of $x$. The \v Cech complex can be also constructed from an infinite collection of points.
\end{defn}

Let $X_1, X_2, \dots,$ be a sequence of i.i.d.~(independent identically distributed) random variables with common probability density function $f(x)$. Define the induced binomial point processes as $\cX_n = \{X_1, \dots, X_n\}$. The object here is the \v Cech complex $\cC(\cX_n, r_n)$ built on $\cX_n$, where the radius $r_n$ also varies with $n$. Denote by $\beta_k(\cK)$ the $k$th Betti number, or the rank of the $k$th homology group, of a simplicial complex $\cK$. The limiting behaviour of Betti numbers $\beta_k(\cC(\cX_n, r_n))$ in various regimes has been studied recently by many authors. See \cite{BH} for a brief survey. This aim of this paper is to refine a limit theorem in the thermodynamic regime, a regime that $n^{1/d} r_n \to r \in (0, \infty)$.

In the thermodynamic regime, the expectations of the $k$th Betti numbers, for $1 \le k \le d-1$, grow linearly in $n$, that is, $c_1 n \le \Ex [\beta_k(\cC(\cX_n, r_n))]  \le c_2 n$ as $n \to \infty$. After centralizing, the strong law of large number holds,
\[
	\frac{1}{n} \Big(\beta_k(\cC(\cX_n, r_n)) - \Ex [\beta_k(\cC(\cX_n, r_n))] \Big) \to 0 \text{ a.s.~as } n \to \infty,
\]
provided that the density function $f$ has compact, convex support and that on the support of $f$, it is bounded both below and above \cite[Theorem~4.6]{ysa}. A remaining problem is to describe the exact limiting behaviour of the expected values of the Betti numbers. This paper gives a solution to that problem. Note that the $0$th Betti number which counts connected components in a random geometric graph was completely described \cite[Chapter~13]{Penrose-book}.

Betti numbers are tightly related to the number of $j$-simplices in $\cC(\cX, r)$, denoted by $S_j(\cC(\cX,r))$ or simply by $S_j(\cX, r)$, which can be expressed as 
\[
	S_j(\cX, r) = \frac{1}{j + 1} \sum_{x \in \cX} \xi(x, \cX),
\]
where $\xi(x, \cX)$ is the number of $j$-simplices containing $x$. Note that $\xi(x, \cX)$ is a local function in the sense that it depends only on points near $x$. Then in the thermodynamic regime, the weak and strong laws of large numbers for $S_j(\cC(\cX_n, r_n))$ hold as a consequence of general results in \cite{Penrose-07, PY-03},
\[
	\frac{S_j(\cX_n, r_n)}{n} \to \hat S_j \text{ a.s.~as } n \to \infty.
\]
The limit $\hat S_j$ can be expressed explicitly. However, Betti numbers do not have expression like the above form, and hence those general results can not be applied.

To establish a limit theorem for Betti numbers, we exploit the following two properties. The first one is the nearly additive property of Betti numbers that was used in \cite{ysa} to study Betti numbers of the \v Cech complex built on stationary point processes. The second one is the property that binomial point processes behave locally like a homogeneous Poisson point process. The latter property is also a key tool to establish the law of large numbers for local geometric functionals \cite{Penrose-07, PY-03}.

Now let us get into more detail to state the main result of the paper. We begin with the definition of a homogeneous Poisson point processes. Let $\bfN$ be the set of all counting measures on $\R^d$ which are finite on any bounded Borel set and for which the measure of a point is at most $1$. Define $\cN$ as the $\sigma$-algebra generated by sets of the form
\[
	\{\mu \in \bfN : \mu(A) = k\},
\] 
where $A$ is a Borel set and $k$ is an integer. Then a point process $\Phi$ is a measurable mapping from some probability space into $(\bfN, \cN)$. For a Borel set $A$, let $\Phi(A)$ denote the number of points in $A$. By definition of the $\sigma$-algebra $\cN$,  $\Phi(A)$ is a random variable. A homogeneous Poisson point process is defined as follows. For some basic properties of point processes, see \cite{Meester-Roy}, for example.
\begin{defn}[Homogeneous Poisson point process] The point process $\cP$ is said to be a Poisson point process with density $\lambda > 0$ if 
\begin{itemize}
	\item[\rm(i)]	for disjoint Borel sets $A_1, \dots, A_k$, the random variables $\cP(A_1), \dots, \cP(A_k)$ are independent;
	\item[\rm(ii)]	for any bounded Borel set $A$, the number of points in $A$ has Poisson distribution with parameter $|A|$, $\cP(A) \sim \Pois(|A|)$, that is,
	\[
		\P(\cP(A) = k) = e^{- \lambda |A|} \frac{\lambda^k |A|^k}{k!}, \quad k = 0,1, \dots,
	\] 
where $|A|$ denotes the Lebesgue measure of $A$.
\end{itemize}
\end{defn}

For homogeneous Poisson point processes, the following law of large numbers for Betti numbers was established in \cite{ysa}. Let $\cP(\lambda)$ be a homogeneous Poisson point process on $\R^d$ with density $\lambda > 0$. Denote by $\cP_{A}(\lambda)$ the restriction of $\cP(\lambda)$ on a Borel set $A$. For a window of the form $W_L = [-\frac {L^{1/d}}2, \frac{L^{1/d}}2)^d$, we write $\cP_{L}(\lambda)$ instead of $\cP_{W_L}(\lambda)$. For $\lambda = 0$, we mean a trivial point process with no point and all functions are assumed to be zero at $\lambda = 0$. Then for $1 \le k \le d - 1$, there is a constant $\hat \beta_k (\lambda, r)$ such that   \cite[Theorem~3.5]{ysa},
\[
	\frac{\beta_k(\cC(\cP_L(\lambda), r)) } {L} \to \hat \beta_k(\lambda, r) \text{ a.s.~as } L \to \infty.
\]
Now we can state our main result.

\begin{thm}\label{thm:binomial-process}
	Assume that the common probability density function $f(x)$ has bounded support, is bounded and Riemann integrable. Then as $n \to \infty$ with $n^{1/d} r_n =  r \in (0, \infty)$, 
	\[
		\frac{\E[\beta_k(\cC(\cX_n, r_n))]}{n} \to \int_\R \hat\beta_k(f(x), r) dx .	\]
\end{thm}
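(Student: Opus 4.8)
The plan is to combine a mesoscopic blocking (``nearly additive'') argument with the Poisson law of large numbers recalled above, so that the binomial complex is compared, cube by cube, with homogeneous Poisson complexes of the correct local intensity. Since Betti numbers are invariant under the dilation $x \mapsto n^{1/d} x$, we have $\beta_k(\cC(\cX_n, r_n)) = \beta_k(\cC(\cY_n, r))$ with $\cY_n := n^{1/d}\cX_n$, a configuration of $n$ i.i.d.\ points whose intensity at $y$ equals $f(n^{-1/d} y)$. Before blocking I record two facts about the Poisson limit. First, the cited convergence is almost sure, and I upgrade it to convergence in mean, $\E[\beta_k(\cC(\cP_L(\lambda), r))]/L \to \hat\beta_k(\lambda, r)$, using $\beta_k \le S_k$ and the bounded moments of the simplex counts of a Poisson \v Cech complex to obtain uniform integrability of $\beta_k/L$. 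Second, superposing an independent Poisson process of intensity $\lambda' - \lambda$ realizes $\cP(\lambda) \subset \cP(\lambda')$, and $\beta_k$ then changes by at most the number of simplices meeting the added points; taking expectations and letting $L \to \infty$ yields the Lipschitz estimate $|\hat\beta_k(\lambda, r) - \hat\beta_k(\lambda', r)| \le C_\Lambda |\lambda - \lambda'|$ on $[0, \Lambda]$, where $\Lambda := \sup f$. The same estimate at finite $L$ makes $\lambda \mapsto \E[\beta_k(\cC(\cP_L(\lambda), r))]/L$ uniformly equicontinuous, so the mean convergence holds uniformly in $\lambda \in [0, \Lambda]$.

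For the blocking, fix a scale $L$ and tile $\R^d$ by cubes $\{Q_i\}$, each a translate of $W_L$. Because inserting or deleting a single simplex changes $\beta_k$ by at most one, the full complex $\cC(\cY_n, r)$ and the disjoint union of the $\cC(\cY_n \cap Q_i, r)$ differ in $\beta_k$ by at most the number of simplices with vertices in two distinct cubes, all of which lie within distance $r$ of the union of the cube boundaries. The bounded intensity of $\cY_n$ bounds the expected number of such simplices by a constant times the volume of that neighbourhood, namely $O(n L^{-1/d})$, since the support of $\cY_n$ has volume $O(n)$ and the boundaries carry surface density $O(L^{-1/d})$. Hence $\frac1n\big|\E[\beta_k(\cC(\cY_n, r))] - \sum_i \E[\beta_k(\cC(\cY_n \cap Q_i, r))]\big| = O(L^{-1/d})$. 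Crucially, since only expectations are manipulated, the dependence between blocks induced by fixing the total number of points never enters.

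It remains to replace each block expectation by a homogeneous Poisson one. Put $\lambda_i := f(x_i)$, where $x_i$ is the centre of $n^{-1/d} Q_i$. I pass from $\cY_n \cap Q_i$ to $\cP_{Q_i}(\lambda_i)$ in two coupling steps. In the first, the binomial block is coupled to an inhomogeneous Poisson block (equal conditional laws given equal counts, with Le Cam total-variation cost $\le n p_i^2$, $p_i := \int_{n^{-1/d} Q_i} f$); bounding the resulting Betti discrepancy on the exceptional event by Cauchy--Schwarz against the second moment $\E[S_k^2] = O(L^2)$ of the block simplex counts gives $O(L^2/\sqrt n)$ per block, hence $O(L \sqrt n)$ in total. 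In the second, a superposition coupling from intensity $f(n^{-1/d} y)$ to the constant $\lambda_i$ changes $\beta_k$ in mean by $O(\delta_i)$ with $\delta_i := \int_{Q_i} |f(n^{-1/d} y) - \lambda_i|\,dy$, and $\sum_i \delta_i = n \int_{\supp f} |f(x) - f(x_{i(x)})|\,dx$, where $x_{i(x)}$ denotes the centre of the block containing $x$; as the mesh $(L/n)^{1/d} \to 0$ this integral tends to $0$ by the Riemann integrability and boundedness of $f$, so $\sum_i \delta_i = o(n)$.

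Finally I assemble the pieces. The uniform mean Poisson law of large numbers gives $\E[\beta_k(\cC(\cP_{Q_i}(\lambda_i), r))] = L\,\hat\beta_k(\lambda_i, r) + L\,\varepsilon_L$ with $\varepsilon_L \to 0$ uniformly, whence $\frac1n \sum_i \E[\beta_k(\cC(\cP_{Q_i}(\lambda_i), r))] = \sum_i \tfrac{L}{n}\,\hat\beta_k(f(x_i), r) + o(1)$, and the latter sum is a Riemann sum for $\int_{\R^d} \hat\beta_k(f(x), r)\,dx$ over cubes of $x$-volume $L/n$; the Lipschitz continuity of $\hat\beta_k(\cdot, r)$ makes the integrand Riemann integrable, so the sum converges to the integral as soon as $(L/n)^{1/d} \to 0$. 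Choosing $L = L(n) \to \infty$ with $L = o(\sqrt n)$, for instance $L = n^{1/4}$, drives the boundary error $O(L^{-1/d})$, the coupling errors $O(L/\sqrt n)$ and $o(1)$, and the Poisson error $o(1)$ to zero simultaneously while keeping $L/n \to 0$. I expect the main obstacle to be not any individual estimate but their simultaneous control: the local Poisson approximations must be made uniform over the $\Theta(n/L)$ blocks and reconciled with the competing demands $L \to \infty$ (needed for the Poisson law of large numbers) and $L/n \to 0$ (needed for the Riemann sum), and the nearly additive boundary term must be shown to be genuinely of lower order; securing the uniform-in-$\lambda$ mean version of the Poisson law of large numbers is the key enabling step.
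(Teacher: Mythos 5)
Your proposal is correct in substance and shares the paper's overall architecture: blocking into cubes of volume $L$, a uniform-in-$\lambda$ mean law of large numbers for homogeneous Poisson \v Cech complexes obtained from pointwise convergence plus equicontinuity (your Lipschitz coupling bound plays the role of the paper's simplex-counting lemma), coupling estimates to replace the locally varying intensity by a constant, and a Riemann-sum passage to the integral, with Riemann integrability of $f$ absorbing the discretization error. Where you genuinely diverge is the binomial-to-Poisson transition: the paper Poissonizes globally, replacing $\cX_n$ by $\bar\cP_n=\{X_1,\dots,X_{N_n}\}$ with $N_n\sim\Pois(n)$, proves the theorem for this non-homogeneous Poisson process, and de-Poissonizes by a separate lemma bounding $\E\big|S_j(\cC(\bar\cP_n,r_n))-S_j(\cC(\cX_n,r_n))\big|$ via Cauchy--Schwarz and $\Var[N_n]=n$; you instead keep the binomial process throughout and approximate each block marginal by a Poisson block through Le Cam's total-variation bound $np_i^2$, paying $O(L^2/\sqrt n)$ per block. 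Both routes work; the paper's confines all Poisson machinery to one clean lemma, while yours avoids introducing $\bar\cP_n$ at the cost of per-block exceptional-event estimates, and of verifying the $O(|A|)$ bound on expected boundary simplex counts directly for the binomial process, where the paper's monotone coupling $\cP_n+\cP(\Lambda-f(\cdot/n^{1/d}))=\cP(\Lambda)$ is unavailable and a direct binomial moment computation is needed.

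One point in your scheme deserves real care: the single simultaneous limit $L=n^{1/4}$ requires the constant in your intensity-smoothing step (the ``$O(\delta_i)$ per block'') to be independent of $L$. The paper's corresponding lemma is proved with a crude global-count bound whose constant $c(k,\Lambda L)$ grows with $\Lambda L$; that is harmless in the paper's iterated scheme (fix $L$, let $n\to\infty$, then $L\to\infty$), because Riemann integrability yields $\sum_i\delta_i=o(n)$ with no rate, but such an $L$-dependent constant would ruin your argument, since $c(k,\Lambda L)\cdot o(1)$ need not vanish when $L\to\infty$ with $n$. You therefore need the sharper, locality-based constant: the expected number of $k$- and $(k{+}1)$-simplices containing an added point is bounded by moments of the Poisson point count in a ball of radius comparable to $r$, uniformly over the region (e.g.\ by the Mecke formula). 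This is true, but it is exactly the kind of uniformity you flag at the end and it must be proved, not assumed; alternatively, switching to the paper's iterated limit makes all of your estimates work even with the crude constants.
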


For the proof, we need a Poissonized version of the binomial processes. Let $N_n$ be a random variable which is independent of $\{X_n\}_{n \ge 1}$ and has Poisson distribution with parameter $n$. Let 
\[
	\bar \cP_n = \{X_1, X_2, \dots, X_{N_n}\}.
\]
Then $\bar \cP_n$ becomes a non-homogeneous Poisson point process with intensity function $n f(x)$. Here a non-homogeneous Poisson point process is defined as follows.

\begin{defn}[Non-homogeneous Poisson point process] Let $f(x) \ge 0$ be a locally integrable function on $\R^d$. The point process $\cP$ is said to be a (non-homogeneous) Poisson point process with intensity function $f(x)$ if 
\begin{itemize}
	\item[\rm(i)]	for mutually disjoint Borel sets $A_1, \dots, A_k$, the random variables $\cP(A_1), \dots, \cP(A_k)$ are mutually independent;
	\item[\rm(ii)]	for any bounded Borel set $A$, $\cP(A)\sim \Pois(\int_A f(x) dx)$.
\end{itemize}
\end{defn}

As proved later, Theorem~\ref{thm:binomial-process} is equivalent to the following result.
\begin{thm}\label{thm:non-homo-Poisson}
Assume that the common probability density function $f(x)$ has bounded support, is bounded and Riemann integrable. Then as $n \to \infty$ with $n^{1/d} r_n =  r \in (0, \infty)$, 
	\[
		\frac{\E[\beta_k(\cC(\bar \cP_n, r_n))]}{n} \to \int_\R \hat\beta_k(f(x), r) dx .	
	\]
\end{thm}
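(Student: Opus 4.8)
The plan is to localise the estimate by partitioning the bounded support of $f$ into small cubes, on each of which the intensity $nf$ is nearly constant, to replace the non-homogeneous process on each cube by a homogeneous one, and then to invoke the scaling invariance of Betti numbers together with \cite[Theorem~3.5]{ysa} to read off the per-cube contribution; summing over cubes produces a Riemann sum for the claimed integral. The scaling input is that Betti numbers are invariant under the simultaneous dilation $x\mapsto tx$ of points and radius, so that the dilation with $t=n^{1/d}$ turns a homogeneous Poisson process of intensity $n\lambda_0$ with radius $r_n=rn^{-1/d}$ in a window $W$ into a homogeneous Poisson process of intensity $\lambda_0$ with radius $r$ in the window $n^{1/d}W$, whose volume is $n|W|$.

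Step 1 (localisation). Fix a cube side $a>0$ and let $\{Q_i\}$ be the cubes of a grid of mesh $a$ meeting $\supp f$; there are $O(a^{-d})$ of them. The first task is to make precise the \emph{nearly additive property} of Betti numbers: writing $\cC(\bar\cP_n,r_n)$ as the union of the subcomplexes $\cC(\bar\cP_n\cap Q_i,r_n)$ together with the simplices straddling cube boundaries, a Mayer--Vietoris argument bounds
\[
	\Big| \beta_k(\cC(\bar\cP_n,r_n)) - \sum_i \beta_k(\cC(\bar\cP_n\cap Q_i,r_n)) \Big| \le C\, B_n,
\]
where $B_n$ counts the simplices meeting the $r_n$-neighbourhood of the grid boundaries. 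Since the grid boundary has total surface area $O(a^{-1})$ inside the bounded support, this neighbourhood has total volume $O(a^{-1}r_n)$, whence $\E[B_n] = O(a^{-1} n^{(d-1)/d})$, so after dividing by $n$ the boundary error is $O(a^{-1}n^{-1/d})$, which tends to $0$ as $n\to\infty$ for each fixed $a$.

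Step 2 (freezing the intensity by coupling). On each $Q_i$ the intensity $nf$ is replaced by the constant $nf(x_i)$ at a representative point $x_i\in Q_i$. Because Betti numbers are \emph{not} monotone in the point set, I would not compare the two processes by inclusion but by a coupling: realise $\bar\cP_n\cap Q_i$ as a thinning of the homogeneous process of intensity $n\sup_{Q_i}f$, so that the two configurations differ by a set of points whose expected cardinality is at most $n\int_{Q_i}(\sup_{Q_i}f-f)\,dx \le n|Q_i|\,\omega_i$, with $\omega_i=\sup_{Q_i}f-\inf_{Q_i}f$. The stability estimate that adding or deleting one point changes $\beta_k$ by at most the number of simplices incident to that point, together with the uniform bound (valid since $f$ is bounded) on the expected number of such incident simplices, gives
\[
	\Big| \E[\beta_k(\cC(\bar\cP_n\cap Q_i,r_n))] - \E[\beta_k(\cC(\cP_{Q_i}(nf(x_i)),r_n))] \Big| \le C\, n|Q_i|\,\omega_i .
\]
Summing over $i$ and dividing by $n$, this discrepancy is at most $C\sum_i|Q_i|\,\omega_i$, which by Riemann integrability of $f$ tends to $0$ as $a\to0$.

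Step 3 (per-cube limit and assembly). For the frozen homogeneous process, the scaling above identifies $\cC(\cP_{Q_i}(nf(x_i)),r_n)$ with $\cC(\cP_{L}(f(x_i)),r)$ for $L=n|Q_i|\to\infty$; upgrading the almost-sure convergence of \cite[Theorem~3.5]{ysa} to convergence in expectation via the uniform integrability of $\beta_k/L$ (which follows from $\beta_k\le S_k$ and a second-moment bound for the simplex count), one obtains $\E[\beta_k(\cC(\cP_{Q_i}(nf(x_i)),r_n))]/n \to |Q_i|\,\hat\beta_k(f(x_i),r)$. Combining Steps 1--3 gives, for each fixed $a$,
\[
	\limsup_{n\to\infty}\Big| \frac{\E[\beta_k(\cC(\bar\cP_n,r_n))]}{n} - \sum_i |Q_i|\,\hat\beta_k(f(x_i),r) \Big| \le C\sum_i|Q_i|\,\omega_i .
\]
Letting $a\to0$, the right-hand side vanishes and the left-hand sum is a Riemann sum converging to $\int_{\R^d}\hat\beta_k(f(x),r)\,dx$; here I would use the scaling relation $\hat\beta_k(\lambda,r)=\lambda\,\hat\beta_k(1,\lambda^{1/d}r)$ and continuity of $\hat\beta_k(1,\cdot)$ to ensure that $x\mapsto\hat\beta_k(f(x),r)$ is Riemann integrable. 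The main obstacle I expect is Step~1: making the nearly additive property quantitative with a boundary error that is genuinely $o(n)$ once the cubes are fixed, since the Mayer--Vietoris correction must be controlled uniformly in the random number of boundary-crossing simplices. The secondary difficulty is the non-monotonicity of Betti numbers, which forces the coupling argument of Step~2 in place of a simple sandwiching, together with the verification that $\hat\beta_k(\lambda,r)$ is regular enough in $\lambda$ for the final Riemann-sum step.
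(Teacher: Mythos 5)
Your proposal is correct in substance and shares the paper's three-part skeleton (cube decomposition, intensity freezing by coupling, homogeneous-Poisson limit assembled into a Riemann sum), but the parametrization is genuinely different, and this changes which lemmas carry the load. The paper partitions space into cubes of side $(L/n)^{1/d}$ that \emph{shrink} with $n$, so that after the dilation $x\mapsto n^{1/d}x$ every window has fixed volume $L$, and the limits are taken first in $n$, then in $L$. Since the frozen intensities $f_i^*$ on the roughly $n|S|/L$ cubes vary with $n$, the paper must control the per-cube error \emph{uniformly} in $\lambda\in[0,\Lambda]$; this is exactly the content of Lemma~\ref{lem:simplices-counting} and the equicontinuity/Arzel\`a--Ascoli argument, and it is also why the boundary error in \eqref{Betti-union} survives as $|S|/L^{1/d}$ and only disappears in the second limit. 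You instead fix the cubes (side $a$), so after dilation each window has volume $na^d\to\infty$; with finitely many cubes whose frozen intensities $f(x_i)$ do not depend on $n$, you may pass to the limit term by term, needing only the \emph{pointwise}-in-$\lambda$ convergence of expectations for the homogeneous process (available as \cite[Lemma~3.3]{ysa}, or via your uniform-integrability upgrade of the a.s.\ law of large numbers), and your boundary error $O(a^{-1}n^{-1/d})$ already vanishes for fixed $a$. The price is paid in your Step 2: you cannot quote an estimate whose constant depends on the expected number of points in a window (the constant $c(k,\Lambda L)$ of Lemma~\ref{lem:difference-two-Poisson} would here become $c(k,\Lambda na^d)\to\infty$); you genuinely need the per-point stability bound you describe, namely Lemma~\ref{lem:Betti-estimate} applied to the superposition coupling together with the Mecke-type bound that the expected number of $k$- and $(k{+}1)$-simplices incident to a given point is bounded by a constant depending only on $\Lambda r^d$, $k$ and $d$ (the same local computation the paper performs for its measure $\mu_{\Lambda,r,j}$); this should be written out, since it is the one step where a naive global bound fails. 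Finally, for the Riemann sum you need continuity of $\hat\beta_k(\cdot,r)$ on $[0,\Lambda]$, which follows by passing to the limit in that same coupling estimate (giving in fact a Lipschitz bound), rather than from scaling alone. The net effect is a clean trade: the paper needs the uniform-in-$\lambda$ convergence machinery but only a crude coupling lemma; your route dispenses with uniformity entirely but requires the sharper, $n$-uniform local coupling estimate.
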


\section{Proofs of main theorems}
We will use the following two important properties of Poisson point processes.
Denote by $\cP(f(x))$ the non-homogeneous Poisson point process with intensity function $f(x)$. \begin{itemize}
	\item[(i)] Scaling property. For any $\theta > 0$ and $t \in \R^d$,
		\[
			\theta ( \cP(f(x)) - t) \overset{d}{=} \cP(\theta^{-d} f(t + \theta^{-1}x)),
		\]
where `$\overset{d}{=}$' denotes the equality in distribution.
	In particular, $\theta (\cP(\lambda) - t) \overset{d}{=} \cP(\theta^{-d} \lambda)$.
	\item[(ii)] Coupling property. Let $\cP(g(x))$ be a Poisson point process with intensity function $g(x)$ which is independent of $\cP(f(x))$. Then 
\[
	\cP(f(x)) + \cP(g(x)) \overset{d}{=} \cP(f(x) + g(x)).
\]
Here `$+$' means the superposition of two point processes.
\end{itemize}

We begin with a result for the simplices counting function.
\begin{lem}[cf.~{\cite[Lemma~3.2]{ysa}}]\label{lem:simplices-counting}
Let $S_j(\lambda, r ; L)$ be the number of $j$-simplices in $\cC(\cP_L(\lambda), r)$. Then for fixed $r > 0$, 
	\[
		\frac{\Ex[S_j(\lambda, r; L)]}{L} \to \hat S_j(\lambda, r) \text{ as } L \to \infty, \text{ uniformly for $0< \lambda \le \Lambda$}.
	\]
In addition, for fixed $r$, the limit $\hat S_j(\lambda, r)$ is a continuous function of $\lambda$ on $[0, \infty)$.
\end{lem}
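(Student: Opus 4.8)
The plan is to compute $\Ex[S_j(\lambda, r; L)]$ in closed form via the multivariate Mecke formula and then extract the limit by isolating a boundary effect. Writing $h_r(x_0, \dots, x_j) = \mathbf{1}[\bigcap_{i=0}^j B_{r/2}(x_i) \ne \emptyset]$ for the indicator that $j+1$ points span a $j$-simplex, Mecke's equation for the Poisson process $\cP_L(\lambda)$ of intensity $\lambda$ on $W_L$ gives
\[
\Ex[S_j(\lambda, r; L)] = \frac{\lambda^{j+1}}{(j+1)!} \int_{W_L^{j+1}} h_r(x_0, \dots, x_j)\, dx_0 \cdots dx_j .
\]
Two structural features of $h_r$ drive the argument, and I would isolate them first: it is invariant under the common translation $(x_0,\dots,x_j)\mapsto(x_0+t,\dots,x_j+t)$, and it has compact support in the difference variables, since $B_{r/2}(x_i)\cap B_{r/2}(x_0)\ne\emptyset$ forces $\|x_i-x_0\|\le r$.

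Next I would change variables $y_i = x_i - x_0$ for $1\le i\le j$ and integrate first over the $y_i$ with $x_0$ held fixed. For $x_0$ in the \emph{bulk} of $W_L$, meaning at distance more than $r$ from $\partial W_L$, the constraints $x_0 + y_i \in W_L$ are inactive on the support of $h_r$, so the inner integral equals the fixed constant
\[
g_r := \int_{(\R^d)^j} h_r(0, y_1, \dots, y_j)\, dy_1\cdots dy_j < \infty ,
\]
finiteness being clear as $h_r$ is a bounded indicator supported on $\{\|y_i\|\le r\}$. For $x_0$ in the boundary layer the inner integral is nonnegative and still bounded above by $g_r$. Since $|W_L| = L$ while the boundary layer has volume $O(L^{(d-1)/d})$, this yields $\int_{W_L^{j+1}} h_r\, d\mathbf{x} = L\, g_r + O(L^{(d-1)/d})$ with implied constant depending only on $d$ and $r$. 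Dividing by $L$ identifies the limit:
\[
\frac{\Ex[S_j(\lambda, r; L)]}{L} = \frac{\lambda^{j+1}}{(j+1)!}\Big(g_r + O(L^{-1/d})\Big) \longrightarrow \frac{\lambda^{j+1}}{(j+1)!}\, g_r =: \hat S_j(\lambda, r) .
\]

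The two remaining claims then follow immediately. Uniformity over $0 < \lambda \le \Lambda$ holds because the error is bounded by $\tfrac{\Lambda^{j+1}}{(j+1)!} g_r\, C L^{-1/d}$, which tends to $0$ at a rate independent of $\lambda$; continuity on $[0,\infty)$ is clear because $\hat S_j(\lambda, r) = \tfrac{\lambda^{j+1}}{(j+1)!} g_r$ is a monomial in $\lambda$ vanishing at $\lambda = 0$, consistent with the trivial-process convention. As a sanity check I would verify against the scaling property: combining $\theta(\cP(\lambda)-t)\overset{d}{=}\cP(\theta^{-d}\lambda)$ with the scale invariance of simplex counts ($S_j(\cP, r) = S_j(\theta\cP, \theta r)$) gives $\hat S_j(\lambda, r) = \lambda\, \hat S_j(1, \lambda^{1/d} r)$, which matches the explicit formula once the substitution $y \mapsto ry$ shows $g_r = r^{dj} g_1$.

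The computation is essentially routine, and I do not expect a serious obstacle; the only points demanding care are the boundary-layer estimate and the verification that its contribution is controlled \emph{uniformly} in $\lambda$. Both are handled by the compact support of $h_r$ together with the crude bound $0 \le (\text{inner integral}) \le g_r$, which decouples the $\lambda$-dependence (a clean factor $\lambda^{j+1}$) from the geometry, so that the single volume estimate above suffices across the whole range $0<\lambda\le\Lambda$.
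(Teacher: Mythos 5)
Your proof is correct, but it takes a genuinely different route from the paper. You compute $\Ex[S_j(\lambda,r;L)]$ in closed form via the multivariate Mecke formula and then isolate a boundary layer: since the indicator $h_r$ is supported on $\{\|x_i-x_0\|\le r\}$, the inner integral equals the constant $g_r$ for $x_0$ in the bulk and lies in $[0,g_r]$ otherwise, giving $\Ex[S_j(\lambda,r;L)]/L = \tfrac{\lambda^{j+1}}{(j+1)!}\bigl(g_r + O(L^{-1/d})\bigr)$ with the error uniform in $\lambda\le\Lambda$ because the $\lambda$-dependence factors out as $\lambda^{j+1}$. The paper instead never computes anything explicitly: it sandwiches the simplex count $A_l(\lambda)$ between sums of a local functional $h(\cP(\lambda)-z)$ over lattice translates, $\sum_{z \in \Z^d \cap V_{l-2r-1}} h(\cP(\lambda)-z) \le A_l(\lambda) \le \sum_{z \in \Z^d \cap V_{l+2r+1}} h(\cP(\lambda)-z)$, and uses stationarity to identify the limit abstractly as $\Ex[h(\cP(\lambda))]$, with uniformity coming from the uniform bound $\Ex[h(\cP(\lambda))]\le \Ex[\cP(\lambda;V_{1+2r})^{j+1}]$ on $[0,\Lambda]$; continuity of the limit is then deduced from continuity of the prelimit functions (proved by a coupling argument) plus uniform convergence. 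Your approach buys more: an explicit formula $\hat S_j(\lambda,r)=\tfrac{\lambda^{j+1}}{(j+1)!}g_r$ (so continuity and the scaling relation are immediate rather than deduced) and a quantitative rate $O(L^{-1/d})$. What the paper's softer argument buys is reusability: the sandwich-by-local-functionals technique does not require a closed-form moment formula, which is exactly why the same scheme can be adapted to Betti numbers later in the paper, whereas the Mecke computation is special to simplex counts (which are U-statistics of the Poisson process). Both proofs are complete and rigorous; yours is arguably the more natural one for this particular lemma.
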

\begin{proof}
For convenience, let $A_l(\lambda) := S_j(\lambda, r; l^d) = S_j( \cC(\cP_{V_l}(\lambda), r) )$, where $V_l = [-\frac{l}{2}, \frac{l}{2})^d$.  Our aim now is to show that 
\[
	\frac{\Ex[A_l(\lambda)]}{l^d} \text{ uniformly converges as } l \to \infty,
\]
and that $\Ex[A_l(\lambda)]$ is continuous for $\lambda \in [0, \infty)$. Let us first show the continuity of $\Ex[A_l(\lambda)]$. For $0 \le \lambda < \mu$, we use the coupling $\cP(\mu) = \cP(\lambda) + \cP(\mu - \lambda)$. Here $\cP(\lambda)$ and $\cP(\mu - \lambda)$ are two independent Poisson point processes with density $\lambda$ and $(\mu - \lambda)$, respectively. Let $N_\lambda$ (resp.~$N_{\mu; \lambda}$) be the number of points of $\cP(\lambda)$ (resp.~$\cP(\mu- \lambda)$) in $V_l$, which has Poisson distribution with parameter $\lambda l^d$ (resp.~$(\mu - \lambda)l^d$). 
Then the continuity follows from a trivial estimate
\begin{align*}
	0 \le A_l(\mu) - A_l(\lambda) 
	\le N_{\mu; \lambda} (N_{\mu; \lambda} + N_\lambda)^j.
\end{align*}

Next, we show the uniform convergence. The proof here is similar to that of the pointwise convergence (\cite[Lemma~3.2]{ysa}). Define the function 
\[
	h(\cP(\lambda)) := \frac{1}{j + 1} \sum_{x \in \cP_1(\lambda)} \#[j \text{-simplices in $\cC(\cP(\lambda), r)$ containing $x$}].
\]
Then for $l>2r + 1$,
\[
	\sum_{z \in \Z^d \cap V_{l -2r -1}} h(\cP(\lambda) - z) \le A_{l}(\lambda) \le \sum_{z \in \Z^d \cap V_{l + 2r + 1}} h(\cP(\lambda) - z).
\]
Consequently, by the stationality of the Poisson point process $\cP(\lambda)$,
\[
	(l - 2r - 2)^d \Ex [h(\cP(\lambda))] \le \Ex[A_{l}(\lambda)] \le (l+2r + 2)^d \Ex [h(\cP(\lambda))].
\]
Note that $\E[h(\cP(\lambda))]$ is non-decreasing in $\lambda$ and for any $\lambda > 0$,
\[
	\Ex [h(\cP(\lambda))] \le \Ex [\cP(\lambda; V_{1 + 2r})^{j + 1}] < \infty.
\]
Here $\cP(\lambda; V_{1 + 2r})$ is the number of points of $\cP(\lambda)$ in $V_{1+2r}$.
Therefore uniformly for $0\le \lambda \le \Lambda$, 
\[
	\frac{\Ex[A_{l}(\lambda)]}{ l^d} \to \E[h(\cP(\lambda))] \text{ as } l \to \infty.
\]
The proof is complete.
\end{proof}

The following estimate for Betti numbers is a key tool to derive the convergence of Betti numbers from that of simplices counting functions. Recall that $\beta_k(\cK)$ denotes the $k$th Betti number of the simplicial complex $\cK$.
\begin{lem}[{\cite[Lemma~2.2]{ysa}}]
\label{lem:Betti-estimate}
	Let $\cK_1, \cK_2$ be two finite simplicial complexes such that $\cK_1 \subset \cK_2$. Then for every $k \ge 1$,
	\[
		\left| \beta_k(\cK_1) - \beta_k (\cK_2) \right| \le \sum_{j = k}^{k + 1} \#\{j \text{-simplices in $\cK_2 \setminus \cK_1$}\}.
	\]
\end{lem}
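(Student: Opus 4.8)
The plan is to reduce the general inclusion to a sequence of single-simplex additions and to control the effect of each addition on homology. First I would build $\cK_2$ from $\cK_1$ by adjoining the simplices of $\cK_2 \setminus \cK_1$ one at a time, ordered so that their dimensions are nondecreasing. This produces a filtration
\[
\cK_1 = \L_0 \subset \L_1 \subset \cdots \subset \L_m = \cK_2,
\]
with $\L_i = \L_{i-1} \cup \{\sigma_i\}$ for a single simplex $\sigma_i$. Since $\cK_2$ is a simplicial complex, every proper face of $\sigma_i$ lies in $\cK_2$, and such a face is either already in $\cK_1$ or belongs to $\cK_2 \setminus \cK_1$ with strictly smaller dimension, hence was adjoined at an earlier step. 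Thus each $\L_i$ is a genuine simplicial complex and the filtration is well defined.

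The heart of the argument is the claim that adjoining a single $j$-simplex $\sigma$ changes at most one Betti number, and by at most one: either $\beta_j$ increases by $1$ or $\beta_{j-1}$ decreases by $1$, while $\beta_k$ is unchanged for every other $k$. To establish this I would work with simplicial chains over $\mathbb Q$, for which $\beta_k = \dim \ker \partial_k - \dim \operatorname{im} \partial_{k+1} = c_k - \operatorname{rank}\partial_k - \operatorname{rank}\partial_{k+1}$, where $c_k$ is the number of $k$-simplices. Adjoining $\sigma$ increases $c_j$ by one and alters only the boundary maps $\partial_j$, whose domain gains the generator $\sigma$, and $\partial_{j+1}$, whose codomain grows but whose rank is therefore unaffected. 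Hence $\operatorname{rank}\partial_j$ either stays the same or increases by one, according to whether $\partial\sigma$ already lies in $\operatorname{im}\partial_j$. In the first case a short computation gives $\Delta\beta_j = +1$ with all other $\beta_k$ fixed; in the second case $\Delta\beta_{j-1} = -1$ with all others fixed.

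Granting this claim, the lemma follows by the triangle inequality along the filtration. A $j$-simplex addition can affect $\beta_k$ only when $j = k$, a possible increase, or $j = k+1$, a possible decrease, and then by at most one in absolute value. Summing over the steps,
\[
\left|\beta_k(\cK_2) - \beta_k(\cK_1)\right| \le \sum_{i=1}^m \left|\beta_k(\L_i) - \beta_k(\L_{i-1})\right| \le \#\{k\text{-simplices in }\cK_2\setminus\cK_1\} + \#\{(k+1)\text{-simplices in }\cK_2\setminus\cK_1\},
\]
which is exactly the asserted bound.

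I expect the main obstacle to be the bookkeeping in the single-simplex step: one must verify carefully that the two displayed cases exhaust the possibilities and that in each the remaining boundary ranks $\operatorname{rank}\partial_{j-1}$, $\operatorname{rank}\partial_{j+1}$ and the dimensions $c_{j-1}, c_{j+1}$ stay fixed, so that no Betti number other than $\beta_j$ or $\beta_{j-1}$ is disturbed. Everything else is formal once the filtration is arranged with nondecreasing dimensions.
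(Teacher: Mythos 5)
Your proof is correct. Note that the paper itself does not prove this lemma at all --- it imports it verbatim from \cite[Lemma~2.2]{ysa} --- and your argument (build $\cK_2$ from $\cK_1$ by adding simplices in nondecreasing dimension, then use the rank formula $\beta_j = c_j - \operatorname{rank}\partial_j - \operatorname{rank}\partial_{j+1}$ to show each $j$-simplex addition either raises $\beta_j$ by one or lowers $\beta_{j-1}$ by one) is essentially the standard proof given in that reference, so there is nothing to fix.
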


For the sake of simplicity, we denote by $\beta_k(\lambda, r; L)$ the $k$th Betti number of the \v Cech complex $\cC(\cP_{W_L} (\lambda), r)$, where $W_L$ is any rectangle of the form $x + [-\frac{L^{1/d}}{2}, \frac{L^{1/d}}{2})^d$.

\begin{lem}
For fixed $r>0$, uniformly for $0 \le \lambda \le \Lambda$,
\[
	\frac{\Ex[ \beta_k (\lambda, r; L)]}{L} \to \hat \beta_k(\lambda, r) \text{ as } L \to \infty.
\]
The limit $\hat \beta_k(\lambda, r)$ has the following scaling property,
\[
	\hat \beta_k(\lambda, r) = \frac{1}{\theta}\hat \beta_k( \lambda \theta, \frac{r}{\theta^{1/d}} ), \text{ for any $\theta > 0$}.
\] 
In particular, $\hat \beta_k(\lambda, r) = \lambda \hat \beta_k (1, \lambda^{1/d} r)$ is a continuous function in both $\lambda$ and $r$, and $\hat\beta(\lambda, r) > 0$, if $\lambda > 0$ and $r> 0$.
\end{lem}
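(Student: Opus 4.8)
The plan is to establish the four claims---uniform convergence, the scaling identity, continuity, and positivity---in turn, substituting the near-additive behaviour of Betti numbers (controlled by Lemma~\ref{lem:Betti-estimate}) for the local-function decomposition that was available for the simplices counting function.

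For the uniform convergence I would run a subadditive comparison. Write $B_l(\lambda) = \beta_k(\cC(\cP_{V_l}(\lambda), r))$ with $V_l = [-\frac{l}{2}, \frac{l}{2})^d$, subdivide $V_{ml}$ into $m^d$ congruent translates $Q_1, \dots, Q_{m^d}$ of $V_l$, and compare $\cC(\cP_{V_{ml}}(\lambda), r)$ with its subcomplex $\bigsqcup_i \cC(\cP_{Q_i}(\lambda), r)$. Since Betti numbers add over disjoint unions, Lemma~\ref{lem:Betti-estimate} bounds the difference by the number of $j$-simplices, $j \in \{k, k+1\}$, whose vertices do not all lie in a single $Q_i$; each such simplex has diameter at most $r$ and hence all vertices within distance $r$ of an internal wall, so they lie in a boundary region of volume $O(r\, d\, m^d l^{d-1})$. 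Taking expectations and using that the expected per-point simplex count is nondecreasing in $\lambda$, hence bounded by its value at $\Lambda$, gives $|\Ex[B_{ml}(\lambda)] - m^d \Ex[B_l(\lambda)]| \le C(\Lambda, r) m^d l^{d-1}$. For $a_l := \Ex[B_l(\lambda)]/l^d$ this reads $|a_{ml} - a_l| \le C(\Lambda, r)/l$, so comparing integers $l, l'$ with $a_{ll'}$ yields $|a_l - a_{l'}| \le C(\Lambda, r)(l^{-1} + l'^{-1})$; thus $(a_l)$ is Cauchy uniformly in $\lambda \in [0, \Lambda]$, defining $\hat\beta_k(\lambda, r)$, and passing from integer side lengths to arbitrary $L$ is done by sandwiching $W_L$ between $V_{\lfloor L^{1/d}\rfloor}$ and $V_{\lceil L^{1/d}\rceil}$ and estimating the thin-shell contribution again by Lemma~\ref{lem:Betti-estimate}. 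This is the main obstacle: unlike the simplices counting function, Betti numbers admit no local additive representation, so the sandwich of Lemma~\ref{lem:simplices-counting} is unavailable and must be replaced by the near-additive comparison above, with every boundary error controlled uniformly in $\lambda$.

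The scaling identity follows from the dilation $x \mapsto \theta^{-1/d} x$. This map is a simplicial isomorphism $\cC(\X, r) \cong \cC(\theta^{-1/d}\X, \theta^{-1/d} r)$, so it preserves $\beta_k$; it sends a window of volume $L$ to one of volume $L/\theta$, and by the scaling property of Poisson processes it turns $\cP(\lambda)$ into $\cP(\theta\lambda)$. Hence $\Ex[\beta_k(\lambda, r; L)] = \Ex[\beta_k(\theta\lambda, r/\theta^{1/d}; L/\theta)]$, and dividing by $L$ and letting $L \to \infty$ gives $\hat\beta_k(\lambda, r) = \frac{1}{\theta} \hat\beta_k(\theta\lambda, r/\theta^{1/d})$; the choice $\theta = 1/\lambda$ produces $\hat\beta_k(\lambda, r) = \lambda\, \hat\beta_k(1, \lambda^{1/d} r)$. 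For continuity it then suffices to treat $r \mapsto \hat\beta_k(1, r)$. For $r \le r'$ one has $\cC(\cP_L(1), r) \subseteq \cC(\cP_L(1), r')$, so Lemma~\ref{lem:Betti-estimate} bounds $|\beta_k(1, r'; L) - \beta_k(1, r; L)|$ by $\sum_{j=k}^{k+1}(S_j(1, r'; L) - S_j(1, r; L))$; taking expectations, dividing by $L$ and invoking Lemma~\ref{lem:simplices-counting} gives $|\hat\beta_k(1, r') - \hat\beta_k(1, r)| \le \sum_{j=k}^{k+1}(\hat S_j(1, r') - \hat S_j(1, r))$, and the continuity of $\hat S_j(1, \cdot)$ (its defining integral has no atom at the tangency threshold) yields continuity of $\hat\beta_k(1, \cdot)$; the product formula then gives joint continuity in $(\lambda, r)$.

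Finally, positivity comes from exhibiting a localized cycle of positive density. Because $1 \le k \le d-1$, one can place $k+2$ points as the vertices of a regular $(k+1)$-simplex (spanning a $(k+1)$-dimensional subspace, as $k+1 \le d$) at a scale for which the minimal enclosing ball of every $k+1$ of them has radius $< r/2$ while that of all $k+2$ has radius $> r/2$; the Čech complex on these points is then exactly $\partial \Delta^{k+1} \cong S^k$, with $\beta_k = 1$, and all the defining inequalities are open. Surrounding such a cluster by an empty moat of width exceeding $r$ makes it a union of isolated connected components, so additivity of Betti numbers forces $\beta_k(\cC(\cP_L(1), r))$ to gain at least one per moated cluster. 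The event that a fixed ball contains exactly this configuration has a positive probability $p$, and $\asymp L$ disjoint such balls fit in $W_L$ with independent occupations; hence $\Ex[\beta_k(1, r; L)] \gtrsim p\,L$ and $\hat\beta_k(1, r) > 0$, which by the scaling formula extends to all $\lambda, r > 0$.
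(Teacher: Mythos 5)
Your proposal is correct, but it takes a genuinely different route from the paper's. The paper does not reprove pointwise convergence or positivity at all: it imports the pointwise limit of the expectations from \cite[Lemma~3.3]{ysa} and positivity from \cite[Theorem~4.2]{ya}, so its only new work is equicontinuity in $\lambda$ --- obtained from the coupling $\cP(\mu) = \cP(\lambda) + \cP(\mu - \lambda)$, Lemma~\ref{lem:Betti-estimate}, and the uniform convergence of the simplex counts (Lemma~\ref{lem:simplices-counting}) --- which is upgraded to uniform convergence by the Arzel\`a--Ascoli criterion; continuity in both variables is then read off from the scaling identity. You instead prove everything from scratch: uniform convergence by a near-additivity argument over congruent subcubes, with the boundary-layer error $O(m^d l^{d-1})$ giving a uniform Cauchy property with explicit rate $O(1/l)$; continuity via monotonicity of the \v Cech complex in $r$; and positivity via the moated-$S^k$-cluster construction, which essentially reproves the homogeneous case of \cite[Theorem~4.2]{ya}. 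Your version is self-contained and quantitative where the paper's is short and citation-driven; the scaling step is identical in both. Two points in your argument deserve tightening. First, your continuity step relies on continuity of $\hat S_j(1, \cdot)$ in $r$, which Lemma~\ref{lem:simplices-counting} does not provide (it gives continuity in $\lambda$ for fixed $r$); your ``no atom at the tangency threshold'' justification is right, but it needs to be made explicit, e.g.\ by writing $\hat S_j(1,r)$ as a Mecke/Palm-type integral whose integrand has a Lebesgue-null discontinuity set. Second, your Cauchy argument constructs \emph{a} limit, whereas the constant $\hat\beta_k(\lambda,r)$ in the statement is the almost-sure limit of \cite[Theorem~3.5]{ysa}; to prove the lemma as stated you should identify the two, for instance by uniform integrability of $\beta_k(\lambda,r;L)/L$ (its second moment is controlled by simplex counts), or by citing the expectation convergence of \cite[Lemma~3.3]{ysa} as the paper does.
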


\begin{proof}
	For fixed $r > 0$ and fixed $\lambda > 0$, the convergence of the expectations of Betti numbers was shown in \cite[Lemma~3.3]{ysa}. The positivity is a consequence of \cite[Theorem~4.2]{ya}.  Here we will show the uniform convergence for $0\le \lambda \le \Lambda$. We use the following criterion for the uniform convergence on a compact set, which is related to the Arzel\`a--Ascoli theorem.  The sequence of continuous functions $\{a_L(\lambda)\}_{L > 0}$ converges uniformly on $[0, \Lambda]$ if and only if it converges pointwise and is equicontinuous, that is, for any $\varepsilon > 0$, there are $\delta > 0$ and $L_0 > 0$ such that 
\[
	|a_L(\lambda_1) - a_L(\lambda_2) |  < \varepsilon \text{ for all $\lambda_1, \lambda_2 \in [0, \Lambda]$,  $|\lambda_1 - \lambda_2| < \delta$, and all $L > L_0$.}
\]
	
Our task now is to show that the sequence 
$\{L^{-1} \Ex[ \beta_k (\lambda, r; L)]\}$ is equicontinuous. Let $\lambda < \mu$. By using the coupling $\cP(\mu) = \cP(\lambda) + \cP(\mu - \lambda)$, the \v Cech complex $\cC(\cP_L(\lambda), r)$ becomes a sub-complex of $\cC(\cP_L(\mu), r)$. Thus, by Lemma~\ref{lem:Betti-estimate}, 
\begin{align*}
	|\beta_{k}(\mu, r ; L) - \beta_{k}(\lambda, r ; L)| &\le \sum_{j = k}^{k + 1} \#\left \{j\text{-simplices in }C(\cP_{L}(\mu), r) \setminus C(\cP_{L}(\lambda), r)  \right\}  \\
	&=\sum_{j = k}^{k + 1} (S_{j}(\mu, r; L) - S_{j}(\lambda, r; L)).
\end{align*}
Therefore 
\[
	\left| \frac{\Ex [ \beta_{k}(\mu, r ; L)]}{L} - \frac{\Ex [ \beta_{k}(\lambda, r; L)]} {L} \right| \le \sum_{j = k}^{k + 1} \left( \frac{\Ex [S_{j}(\mu, r ; L)]}{L} - \frac{\Ex [ S_{j}(\lambda, r; L)]}{L} \right).
\]
The sequence $\{L^{-1}\E[S_j(\lambda, r; L)]\}$ converges uniformly on $[0, \Lambda]$ by Lemma~\ref{lem:simplices-counting}, and hence, is equicontinuous, which then implies the equicontinuity of the sequence 
$\{L^{-1} \Ex[ \beta_k (\lambda, r; L)]\}$.

By observing that $\theta^{-1/d} \cP(\lambda)$ has the same distribution with $\cP(\lambda \theta)$, we obtain the scaling property of $\hat \beta_k(\lambda, r)$. It then follows from the scaling property that $\hat \beta_k(\lambda, r)$ is continuous in both $\lambda$ and $r$.
The lemma is proved.
\end{proof}

Let us now consider the scaled Poissonized version $\cP_n = \{n^{1/d}X_1, n^{1/d} X_2, \dots, n^{1/d}X_{N_n}\}$. Recall that $N_n$ is independent of $\{X_n\}$ and has Poisson distribution with parameter $n$. Then $\cP_n = n^{1/d}\bar \cP_n$ is a non-homogeneous Poisson point process with the intensity function $f_n(x) := f(x/n^{1/d})$. It is clear that $\cC(\cP_n, r) = \cC(\bar \cP_n, r_n)$ because $n^{1/d}r_n = r$. Thus  Theorem~\ref{thm:non-homo-Poisson} can be rewritten as follows.
\begin{thm}\label{thm:non-homo-Poisson-scaled}
Assume that the common probability density function $f(x)$ has bounded support, is bounded and Riemann integrable. Then for fixed $r > 0$, as $n \to \infty$,
\[
	\frac{\Ex [\beta_k (\cC(\cP_n, r))]}{n} \to \int_\R \hat\beta_k(f(x), r) dx = \int_\R \hat \beta_k(1, {f(x)^{1/d}} r) f(x) dx .
\]
\end{thm}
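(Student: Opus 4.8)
The plan is to transfer the homogeneous-window convergence $L^{-1}\Ex[\beta_k(\lambda,r;L)] \to \hat\beta_k(\lambda,r)$ (uniform in $\lambda\in[0,\Lambda]$) to the slowly varying process $\cP_n$ by a block/localization argument, exploiting that on the blown-up scale the intensity $f_n(x)=f(x/n^{1/d})$ is almost constant over regions of bounded diameter. Fix a large parameter $M>0$ and tile $\R^d$ by the cubes $Q_z = z + [-M/2,M/2)^d$, $z\in M\Z^d$. Put $\Lambda := \sup f < \infty$ and, for each $z$, let $\lambda_z^- = \inf\{f_n(x):x\in Q_z\}$ and $\lambda_z^+ = \sup\{f_n(x):x\in Q_z\}$, both in $[0,\Lambda]$; only the $O(n/M^d)$ cubes meeting $n^{1/d}\supp f$ carry points.

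First I would pass from the global complex to the disjoint union of block complexes. The family $\bigsqcup_z \cC(\cP_n\cap Q_z, r)$ is a subcomplex of $\cC(\cP_n,r)$, and the simplices in the difference are precisely those with vertices in two distinct cubes, hence lying within distance $r$ of the cube grid; as $f$ is bounded, this region has expected point count $O(n/M)$. So by Lemma~\ref{lem:Betti-estimate} and additivity of $\beta_k$ ($k\ge1$) over disjoint unions,
\[
\Big| \Ex[\beta_k(\cC(\cP_n,r))] - \sum_z \Ex[\beta_k(\cC(\cP_n\cap Q_z, r))] \Big| \le C\,\frac{n}{M}.
\]
Next comes homogenization inside each block. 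Using the coupling property I would sandwich $\cP_{Q_z}(\lambda_z^-) \subset \cP_n\cap Q_z \subset \cP_{Q_z}(\lambda_z^+)$, superposing independent Poisson processes of the nonnegative intensity differences. Applying Lemma~\ref{lem:Betti-estimate} twice, the block Betti number differs from $\beta_k(\cC(\cP_{Q_z}(\lambda_z^-),r))$ by at most the number of $k$- and $(k+1)$-simplices meeting the $O((\lambda_z^+-\lambda_z^-)M^d)$ extra points; since the ambient density is at most $\Lambda$, each extra point lies in $O(1)$ simplices in expectation, so the total homogenization error is at most $C\sum_z(\lambda_z^+-\lambda_z^-)M^d = C\,n\,(U_n-L_n)$, where $U_n-L_n$ is the gap between the upper and lower Darboux sums of $f$ on mesh $M/n^{1/d}$. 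Riemann integrability forces $U_n-L_n\to0$ as $n\to\infty$ for each fixed $M$.

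It then remains to evaluate $\sum_z \Ex[\beta_k(\cC(\cP_{Q_z}(\lambda_z^-),r))]$. Each summand equals $\Ex[\beta_k(\lambda_z^-,r;M^d)]$, so by the uniform convergence with $L=M^d$ it is $M^d\hat\beta_k(\lambda_z^-,r) + M^d\varepsilon_M$, the rate $\varepsilon_M\to0$ being uniform in $\lambda_z^-\in[0,\Lambda]$; summed and divided by $n$, this error is $O(\varepsilon_M)$. The remaining main term is the Darboux-type sum $\sum_z (M/n^{1/d})^d \hat\beta_k(\lambda_z^-,r)$ for $\int_\R \hat\beta_k(f(x),r)\,dx$ on mesh $M/n^{1/d}$. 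Since $\hat\beta_k(\cdot,r)$ is continuous, the integrand $\hat\beta_k(f(\cdot),r)$ is Riemann integrable; using uniform continuity of $\hat\beta_k(\cdot,r)$ on $[0,\Lambda]$ to replace $\lambda_z^-$ by an arbitrary tag (at the cost of $\sum_z (M/n^{1/d})^d\,\omega_{\hat\beta}(\mathrm{osc}_{Q_z} f)\to0$, with $\omega_{\hat\beta}$ the modulus of continuity), this sum converges to the integral as $n\to\infty$. Collecting the three contributions yields, for each fixed $M$,
\[
\limsup_{n\to\infty}\Big| \frac{\Ex[\beta_k(\cC(\cP_n,r))]}{n} - \int_\R \hat\beta_k(f(x),r)\,dx \Big| \le \frac{C}{M} + C\,\varepsilon_M,
\]
and letting $M\to\infty$ closes the argument.

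The delicate point, and the main obstacle, is that $f$ is only Riemann integrable, so homogenization cannot rely on continuity of $f$; the oscillation sums $U_n-L_n$ together with the \emph{uniformity in $\lambda$} of both Lemma~\ref{lem:simplices-counting} and the convergence $L^{-1}\Ex[\beta_k(\lambda,r;L)]\to\hat\beta_k(\lambda,r)$ are exactly what make the block-by-block comparison legitimate. Correspondingly one must take $n\to\infty$ first (so that the mesh $M/n^{1/d}\to0$ and the Darboux gaps vanish) and only afterwards send $M\to\infty$ to kill the boundary and window-discretization errors.
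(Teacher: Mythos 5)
Your proposal is correct and follows essentially the same route as the paper: tile space into cubes of fixed volume on the blown-up scale (your $M^d$ is the paper's $L$), control the grid-boundary simplices via the bounded-intensity coupling, homogenize each block using the Poisson coupling and Riemann integrability (Darboux gaps), invoke the uniform-in-$\lambda$ convergence $L^{-1}\Ex[\beta_k(\lambda,r;L)]\to\hat\beta_k(\lambda,r)$ per block, and take $n\to\infty$ before the block size tends to infinity. The only differences are cosmetic (sandwiching with both $\inf$ and $\sup$ of $f$ on each block instead of only the $\sup$, and phrasing the per-block comparison inline rather than as the paper's Lemma~\ref{lem:difference-two-Poisson}).
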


\begin{lem}\label{lem:difference-two-Poisson}
Assume that $f(x), g(x) \le \Lambda$ in $W_L$, where $W_L \subset \R^d$ is a set of volume $L$. Then there exists a constant $c= c(k,\Lambda L)$ such that 
\[
	\Big| \Ex [\beta_k(\cC(\cP_{W_L}(f(x)), r))] - \Ex [\beta_k(\cC(\cP_{W_L}(g(x)), r))] \Big| \le c \int_{W_L}|f(x) - g(x)|dx.
\]
\end{lem}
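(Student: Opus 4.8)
\section*{Proof proposal}

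The plan is to reduce the comparison of $\cP_{W_L}(f(x))$ and $\cP_{W_L}(g(x))$ to a monotone comparison against a common lower intensity, and then to control the resulting change in Betti numbers by the change in the number of simplices via Lemma~\ref{lem:Betti-estimate}. Since $f$ and $g$ need not be ordered, I would first set $h(x) = \min\{f(x), g(x)\}$, so that $0 \le h \le f$, $0 \le h \le g$, both $\le \Lambda$, and $f - h, g - h \ge 0$. Using the coupling property, I realize three independent Poisson processes $\cP(h)$, $\cP(f-h)$, $\cP(g-h)$ on $W_L$ with $\cP(h) + \cP(f-h) \overset{d}{=} \cP_{W_L}(f(x))$ and $\cP(h) + \cP(g-h) \overset{d}{=} \cP_{W_L}(g(x))$. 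In this coupling $\cC(\cP(h), r)$ is (surely) a subcomplex of both $\cC(\cP(f),r)$ and $\cC(\cP(g),r)$, so by the triangle inequality it suffices to bound $|\Ex[\beta_k(\cC(\cP(f),r))] - \Ex[\beta_k(\cC(\cP(h),r))]|$ and its analogue with $g$ in place of $f$, and to observe at the end that $\int_{W_L}(f-h) + \int_{W_L}(g-h) = \int_{W_L}|f-g|$.

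For the first quantity I would apply Lemma~\ref{lem:Betti-estimate} pathwise: since $\cC(\cP(h),r) \subset \cC(\cP(h)+\cP(f-h), r)$, the difference $|\beta_k(\cC(\cP(f),r)) - \beta_k(\cC(\cP(h),r))|$ is at most $\sum_{j=k}^{k+1}$ of the number of $j$-simplices present for $\cP(f)$ but not for $\cP(h)$, i.e.\ those using at least one point of $\cP(f-h)$. Taking expectations, and noting that under the coupling this count is exactly $S_j(\cC(\cP(f),r)) - S_j(\cC(\cP(h),r))$, the task becomes bounding $\Ex[S_j(\cC(\cP(f),r))] - \Ex[S_j(\cC(\cP(h),r))]$ for $j = k, k+1$.

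Here I would use the standard integral (Mecke) representation of the expected simplex count,
\[
\Ex[S_j(\cC(\cP(f),r))] = \frac{1}{(j+1)!}\int_{W_L^{\,j+1}} \varphi_r(x_0,\dots,x_j)\prod_{i=0}^{j} f(x_i)\, dx_0\cdots dx_j,
\]
where $\varphi_r(x_0,\dots,x_j) = \mathbf 1\big[\bigcap_{i} B_{r/2}(x_i) \ne \emptyset\big]$, together with the same formula for $h$. Subtracting and telescoping the product difference gives, pointwise, $\prod_{i} f(x_i) - \prod_{i} h(x_i) \le \Lambda^{j}\sum_{m=0}^{j}\big(f(x_m) - h(x_m)\big)$, using $0 \le h \le f \le \Lambda$ and $f - h \ge 0$. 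Bounding $\varphi_r \le 1$ and integrating out the $j$ variables other than $x_m$ over $W_L$, each contributing a factor $L$, then yields $\Ex[S_j(\cC(\cP(f),r))] - \Ex[S_j(\cC(\cP(h),r))] \le \tfrac{(\Lambda L)^j}{j!}\int_{W_L}(f - h)\,dx$. Summing over $j \in \{k, k+1\}$, doing the same for $g$, and adding gives the claim with $c(k, \Lambda L) = \sum_{j=k}^{k+1}(\Lambda L)^j/j!$.

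I expect the only point needing real care to be the passage from Betti numbers to simplex counts: one must work in the coupled realization so that the inclusion of complexes holds surely and Lemma~\ref{lem:Betti-estimate} applies to each sample path, and one must identify the ``extra'' simplices it counts with the additional simplices whose expectation is $\Ex[S_j(f)] - \Ex[S_j(h)]$. The telescoping estimate for the product difference is what produces the correct $L^1$ dependence on $f-g$; and crucially, bounding $\varphi_r$ by $1$ rather than exploiting that it is supported on tuples lying in a ball of radius $r$ is exactly what makes the constant depend on $\Lambda L$ alone (together with $k$), as stated, at the cost of a weaker numerical constant. Finiteness of all integrals involved is automatic, since the intensities are bounded by $\Lambda$ and supported in the bounded window $W_L$.
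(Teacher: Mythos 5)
Your proof is correct, and its skeleton is the same as the paper's: couple both processes against a common comparison intensity, apply Lemma~\ref{lem:Betti-estimate} under that coupling (where the inclusion of complexes holds surely) to reduce the Betti-number difference to a count of extra simplices, and finish with the triangle inequality, noting that the two one-sided $L^1$ differences add up to $\int_{W_L}|f-g|$. You differ in two places. First, you take $h=\min\{f,g\}$ and compare downward with three independent processes, while the paper takes $h=\max\{f,g\}$ and compares each of $\cP(f)$, $\cP(g)$ upward into $\cP(h)$; this is an immaterial symmetric variant. Second, and more substantively, the key quantitative step is done by a different mechanism: the paper bounds the extra simplices \emph{pathwise} --- every such $j$-simplex uses at least one of the $N_t\sim\Pois(t)$ extra points, $t=\int(h-f)$, and all its vertices lie among at most $N_t+N_{\Lambda L-t}$ points, giving the bound $2N_t(N_t+N_{\Lambda L-t})^{k+1}$, whose expectation is a polynomial in $t$ with no constant term and hence at most $c(k,\Lambda L)\,t$ for $t\le \Lambda L$ --- whereas you compute $\Ex[S_j]$ exactly via the multivariate Mecke (Campbell) formula and control the difference of product intensities by telescoping, $\prod_i f(x_i)-\prod_i h(x_i)\le \Lambda^j\sum_m (f(x_m)-h(x_m))$. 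Both are valid; your route invokes the Mecke equation (standard, though not stated in the paper) and in exchange yields the explicit constant $\sum_{j=k}^{k+1}(\Lambda L)^j/j!$ and avoids the paper's slightly fussy ``polynomial without constant term, bounded on a compact $t$-interval'' argument. Your identification of the number of $j$-simplices in $\cC(\cP(f),r)\setminus\cC(\cP(h),r)$ with $S_j(\cC(\cP(f),r))-S_j(\cC(\cP(h),r))$ is exactly right under the coupling, since the complexes are nested.
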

\begin{proof}
By considering $f(x) := f(x)|_{W_L}$ and $g(x) := g(x)|_{W_L}$, we omit the subscript $W_L$ in formulae. Let $h(x) = \max \{f(x), g(x) \}$. A key idea here is the following coupling 
\[
	\Poisson(h(x)) = \Poisson(f(x)) + \Poisson(h(x) - f(x)).
\] 
Let $t = \int (h(x) - f(x)) dx = \int (g(x) - f(x))^{+} dx$ and $N_t$ be the number of points of $\cP(h(x)- f(x))$ in $W_L$. Then $N_t$ has Poisson distribution with parameter $t$. The total number of points of $\cP(h(x))$ is bounded by $N_t + N_{\Lambda l - t}$, where $N_{\Lambda L -t}$ has Poisson distribution with parameter $(\Lambda L - t)$ which is independent of $N_t$. It now follows from Lemma~\ref{lem:Betti-estimate} that 
	\begin{align*}
		\Big|\beta_k(\cC(\cP(f(x)), r)) - \beta_k(\cC(\cP(h(x)), r)) \Big| &\le \sum_{j = k}^{k + 1} S_j \Big(\cC(\cP(h(x)), r) \setminus C(\cP(f(x)), r) \Big) \\
		&\le 2N_t (N_t + N_{\Lambda L - t})^{k+ 1},
	\end{align*}
and hence, 
	\[
		\Big|\Ex[\beta_k(\cC(\cP(f(x)), r))] - \Ex[\beta_k(\cC(\cP(h(x)), r)) ]\Big| \le 2\E[ N_t (N_t + N_{\Lambda L - t})^{k+1}].
	\]
The right hand side is a polynomial of $t$ whose smallest order is $1$ and note that  $t \le \Lambda L$, thus it is bounded by $c(k, \Lambda L) t$, where the constant $c(k, \Lambda L)$ depends only on $k, \Lambda L$, namely we have 
\[
	\Big|\Ex[\beta_k(\cC(\cP(f(x)), r))] - \Ex[\beta_k(\cC(\cP(h(x)), r)) ]\Big| \le 	c \int (g(x) - f(x))^{+}dx.
\]

An analogous estimate holds when we compare the $k$th Betti number of  $\cC(\cP(g(x)), r)$ and $\cC(\cP(h(x)), r)$. The proof is complete.
\end{proof}

\begin{proof}[Proof of Theorem~{\rm\ref{thm:non-homo-Poisson-scaled}}]
Let $S$ be the support of $f$ and $\Lambda := \sup f(x)$. 
Divide $\R^d$ according to the lattice $(L/n)^{1/d}\Z^d$ and let $\{C_i\}$ be the cubes which intersect with S. Since we also consider the Poisson point process with density $0$, we may assume that $S = \cup_i C_i$. 

Let $W_i$ be the image of $C_i$ under the map $x \mapsto n^{1/d}x$. Then $W_i$ is a cube of length $L^{1/d}$. Let $\beta_k(W_i, r)$ be the $k$th Betti number of $\cC(\cP_n|_{W_i}, r)$. 
We now compare the $k$th Betti number of $\cC(\cP_n, r)$ and that of $\cup_{i} \cC(\cP_n|_{W_i}, r)$ by using Lemma~\ref{lem:Betti-estimate},
\begin{align}
	\Big| \beta_k(\cC(\cP_n, r)) - \beta_k(\bigcup_i\cC(\cP_n|_{W_i}, r)) \Big| &\le \sum_{j = k}^{ k + 1} S_j\Big(\cC(\cP_n, r) \setminus \bigcup_i\cC(\cP_n|_{W_i}, r) \Big) \nonumber\\
	&\le \sum_{j = k}^{k + 1} S_j( \cP_n, r; \cup_i (\partial W_i)^{(r)}). \label{boundary}
\end{align}
Here $S_j(\cP_n, r; A)$ is the number of $j$-simplices in $\cC(\cP_n, r)$ which has a vertex in $A$, $\partial A$ denotes the boundary of the set $A$ and $A^{(r)}$ is the set of points with distance at most $r$ from $A$. The second inequality holds because any simplex in $\cC(\cP_n, r) \setminus \cup_i\cC(\cP_n|_{W_i}, r)$ must have a vertex in $\cup_i (\partial W_i)^{(r)}$.

Finally, by the coupling $\cP(\Lambda) = \cP_n + \cP(\Lambda - f(x/n^{1/d}))$, it follows that for any bounded Borel set $A$,
\[
	\Ex [S_j(\cP_n, r; A)] \le \Ex [S_j(\cP(\Lambda), r ; A)]  \le \Ex[ \sum_{x \in \cP(\Lambda) \cap A} \cP(\Lambda; B_r(x))^{j}]=: \mu_{\Lambda, r, j} (A) < \infty.
\]
Here $\mu_{\Lambda, r, j}$ becomes a translation invariant measure on $\R^d$ which is finite on bounded Borel sets. Thus $\mu_{\Lambda, r, j} (A) = c(\Lambda, r, j) |A|$ for some constant $c(\Lambda, r, j)$ depending only on $\Lambda, r$ and $j$. Now by taking the expectation in \eqref{boundary}, we get 
\begin{eqnarray*}
	&\Big| \Ex [\beta_k(\cC(\cP_n, r))] - \sum_i \Ex [\beta_k(\cC(\cP_n|_{W_i}, r)) ] \Big| \nonumber\\ 
	&\le c  \sum_ i  |(\partial W_i)^{(r)}| \le c' \frac{n |S|}{L}  L^{(d-1)/d} = c' \frac{n |S|}{L^{1/d}},
\end{eqnarray*}
where $c$ and $c'$ are constants which do not depend on $n$ and $L$. Therefore,
\begin{equation}\label{Betti-union}
	\limsup_{n \to \infty} \Big| \frac{\Ex [\beta_k(\cC(\cP_n, r))]}{n} - \frac{1}{n} \sum_i \Ex [\beta_k(W_i, r) ] \Big|  \le c' \frac{ |S|}{L^{1/d}}.
\end{equation}

Let $f^{*}_i := \sup_{x \in C_i} f(x)$ and $\beta_k(f_i^*, r)$ be the $k$th Betti number of the \v Cech complex built on a homogeneous Poisson point process $\cP_{W_i}(f_i^*)$ with density $f_i^*$ restricted on $W_i$. Then by Lemma~\ref{lem:difference-two-Poisson},
\begin{align*}
	\Big|\Ex [\beta_k(W_i, r)] - \Ex [\beta_k(f^{*}_i, r)] \Big| &\le c(k, \Lambda L) \int_{W_i}{(f^{*}_i - f(x/n^{1/d})) dx} \\
	&= c(k, \Lambda L) n \int_{C_i} (f^{*}_i - f(x)) dx.
\end{align*}
Here $c(k, \Lambda L)$ is a constant depending only on $k$ and $\Lambda L$.
Consequently,
\[
	\left|  {\frac 1n \sum_{i} \Ex[ \beta_k(W_i, r)] - \frac 1n \sum_{i} \Ex [\beta_k(f^{*}_i, r)]}\right| \le c(k, \Lambda L) \sum_i \int_{C_i} (f_i^{*} - f(x)) dx \to 0 \text{ as } n \to \infty,
\]
because the function $f(x)$ is assumed to be Riemann integrable.

Next by comparing $\Ex [\beta_k(f_i^*, r)]$ with the limit $\hat \beta_k(\lambda, r)$, we get
\begin{align*}
	\left| {\frac 1n \sum_{i} \Ex [\beta_k(f^{*}_i, r)] - \frac{L}{n} \sum_i  \hat \beta_k(f^*_i, r)}\right| 	&\le \frac{L}{n} \#\{C_i \} \sup_{0 \le \lambda \le \Lambda} \left|\frac{\Ex [\beta_k(\lambda, L)]}{L} - \hat \beta_k(\lambda, r)\right|  \\
	&= |S| \sup_{0 \le \lambda \le \Lambda} \left|\frac{\Ex [ \beta_k(\lambda, L)]}{L} - \hat \beta_k(\lambda, r)\right|.
\end{align*}
Note that for fixed $L$, as $n \to \infty$, 
\[
	\sum_i \hat \beta_k(f^*_i, r) \frac Ln \to \int_S \hat \beta_k(f(x), r) dx.
\]
Therefore  
\begin{equation}\label{fix-l}
	\limsup_{n \to \infty} \left| \frac{1}{n} \sum_i \Ex [\beta_k(W_i, r)] - \int_S \hat \beta_k(f(x), r) dx \right| \le |S| \sup_{ 0 \le \lambda \le \Lambda} \left|\frac{\Ex [\beta_k(\lambda, L)]}{L} - \hat \beta_k(\lambda, r)\right|.
\end{equation}
Combining two estimates \eqref{Betti-union} and \eqref{fix-l} and then let $L \to \infty$, we get the desired result. The proof is complete.
\end{proof}

The result for binomial point processes will follow from Theorem~\ref{thm:non-homo-Poisson} and the following result.
\begin{lem}
As $n \to \infty$,
\[
	\left| \frac{\Ex[\beta_k(\cC(\bar \cP_n, r_n ))]}{n} - \frac{\E[\beta_k(\cC(\cX_n, r_n))]}{n} \right| \to 0.
\]
\end{lem}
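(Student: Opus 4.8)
The plan is to couple the two point processes through their common underlying sequence $\{X_i\}$ and to show that the two \v Cech complexes differ only near the $|N_n - n|$ points by which the two index ranges disagree. Since $\cX_n = \{X_1,\dots,X_n\}$ and $\bar\cP_n = \{X_1,\dots,X_{N_n}\}$ are built from the same i.i.d.\ sequence, one of them is always a sub-collection of the other: if $N_n \ge n$ then $\cX_n \subseteq \bar\cP_n$, and if $N_n < n$ then $\bar\cP_n \subseteq \cX_n$. In either case the smaller \v Cech complex is a subcomplex of the larger one, and every simplex lying in the difference must contain at least one of the ``extra'' points, namely a point of $\{X_{n+1},\dots,X_{N_n}\}$ in the first case or of $\{X_{N_n+1},\dots,X_n\}$ in the second. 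Applying Lemma~\ref{lem:Betti-estimate} to these nested complexes gives, in both cases,
\[
  \bigl|\beta_k(\cC(\cX_n,r_n)) - \beta_k(\cC(\bar\cP_n,r_n))\bigr| \le \sum_{j=k}^{k+1} \#\{j\text{-simplices of the larger complex with a vertex among the extra points}\}.
\]

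To bound the right-hand side I would use that a $j$-simplex of a \v Cech complex containing a given point $x$ has all of its remaining $j$ vertices within distance $r_n$ of $x$; hence the number $\xi_j(x,\cX)$ of $j$-simplices containing $x$ satisfies $\xi_j(x,\cX) \le \binom{D_x}{j}$, where $D_x$ is the number of points of $\cX$ other than $x$ lying in $B_{r_n}(x)$. Conditioning on $N_n = m$ and summing this estimate over the extra points, the conditional expectation of the difference is at most $\sum_{j=k}^{k+1} |m-n|\, a_j(m\vee n)$, where, by exchangeability of the $X_i$, $a_j(N) = \Ex[\binom{D}{j}]$ with $D \sim \mathrm{Binomial}(N-1, p)$ given the first point and $p = \P(\|X_2 - X_1\| \le r_n) \le \Lambda\,|B_{r_n}(0)| = c_d \Lambda r^d / n$. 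Thus $a_j(N) \le ((N-1)p)^j/j!$, which stays bounded by a constant depending only on $j, r, \Lambda, d$ as long as $N \le 2n$. This is exactly the place where the thermodynamic scaling $n^{1/d} r_n = r$ enters: it makes the expected local degree $O(1)$, so each extra point contributes only $O(1)$ simplices in expectation.

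With this uniform bound, the rest is assembly. On the main range $m \le 2n$ one has $\Ex[\,\text{difference}\mid N_n=m\,] \le C|m-n|$ with $C$ independent of $n$, so the contribution of this range to the expected difference is at most $C\,\Ex|N_n - n| \le C\sqrt{n}$, using $\Var(N_n) = n$ and Cauchy--Schwarz. On the tail range $m > 2n$ I would use the crude deterministic bound $m\binom{m}{j} \le m^{k+2}$ together with the large-deviation estimate $\P(N_n > 2n) \le e^{-cn}$ and the polynomial growth of the moments of $N_n$; this makes the tail contribution $O(n^{k+2}e^{-cn/2}) = o(1)$. Dividing by $n$ then yields $\tfrac1n\,\Ex[\,\text{difference}\,] = O(n^{-1/2}) + o(n^{-1}) \to 0$, which is the claim. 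The main obstacle is the second paragraph: one must control the number of simplices touching the extra points uniformly over all relevant values of $N_n$ rather than only at $m=n$, and it is the thermodynamic scaling that keeps the per-point bound $a_j$ of order one; the de-Poissonization tail and the fluctuation estimate $\Ex|N_n-n| = O(\sqrt n)$ are then routine.
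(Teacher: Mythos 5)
Your proposal is correct and follows essentially the same route as the paper: couple the two processes through the common i.i.d.\ sequence, apply Lemma~\ref{lem:Betti-estimate} to the nested complexes, bound the expected number of $j$-simplices per extra point using $\P(X_i \in B_x(r_n)) \le c/n$ (the thermodynamic scaling), and conclude via $\Ex|N_n - n| \le \sqrt{n}$. The only difference is bookkeeping for large $N_n$: you truncate at $N_n \le 2n$ and kill the tail with a Poisson large-deviation estimate, whereas the paper keeps the bound $c\,|m-n|\bigl(1+(m/n)^j\bigr)$ valid for all $m$ and applies Cauchy--Schwarz together with the polynomial growth of Poisson moments; both are routine and valid.
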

\begin{proof}
 By Lemma~\ref{lem:Betti-estimate} again, we have,
\[
	\Big| \beta_k(\cC(\bar \cP_n, r_n)) - \beta_k(\cC(\cX_n, r_n))  \Big| \le \sum_{j = k}^{k + 1}  \Big|S_j(\cC(\bar \cP_n, r_n)) - S_j (\cC(\cX_n, r_n)) \Big|.
\]
The right hand side, divided by $n$, converges to $0$ as a consequence of general results in \cite{Penrose-07, PY-03} applied to $S_j$. Here we will give an easy proof. 

For any $m$, let 
\[
	S_j(m, n) = |S_j(\cC(\cX_m, r_n)) - S_j (\cC(\cX_n, r_n)) |.
\]
Since the probability density function $f(x)$ is bounded, in the regime that $n r_n^d \to r^d$,  the probability that $\{ X_1 \in B_x({r_n}) \}$ is bounded by 
\[
	\P(X_1  \in B_{x} (r_n)) \le \frac{c}{n}, 
\]
for some constant $c$ which does not depend on $n$.

For $m > n \ge j$, since each $j$-simplices in $\cC(\cX_m, r_n) \setminus \cC(\cX_n, r_n)$ must contain at least one vertex in $\{X_{n + 1}, \dots, X_m\}$, we have 
\begin{align*}
	\Ex[S_j(m, n)] &\le (m - n) \Ex [\#\{j \text{-simplices in $\cC(\cX_m, r_n)$ containing $X_m$}\}] \\
	&\le (m-n) \binom mj \P( X_1 \in B_{X_m} (r_n) , \dots, X_j \in B_{X_m}(r_n))  \\
	&\le (m - n) \frac{m !}{j! (m - j)!} \left(  \frac{c}{n} \right)^j \\
	&\le c_1  (m - n) \left( \frac{m}{n} \right)^j .
\end{align*}
When $j \le m < n$, we change the role of $m$ and $n$ to get 
\[
	\Ex[S_j(m, n)] \le (n - m) \binom n j  \left(  \frac{c}{n} \right)^j \le c_2  (n - m).
\]
Combining two estimates, we have 
\[
	\Ex[S_j(m, n)] \le c_3  |m - n| \left[1 + \left( \frac{m}{n} \right)^j \right].
\]
Therefore,
\begin{align*}
	\Ex \left[\left|S_j(\cC(\bar \cP_n, r_n)) - S_j (\cC(\cX_n, r_n)) \right| \right] &\le c_3 \Ex \left[ |N_n - n| \left( 1 + \frac{(N_n)^j}{n^j} \right) \right] \\
	&\le c_3 \Ex[(N_n - n)^2]^{1/2} \Ex \bigg[ \left( 1 + \frac{(N_n)^j}{n^j} \right)^2 \bigg]^{1/2}.
\end{align*}
Here in the last inequality, we have used the Cauchy--Schwarz inequality. Note that $\E[(N_n)^j]$ is a polynomial in $n$ of degree $j$. Thus the second factor in the above estimate remains bounded as $n \to \infty$. Note also that
\[
	\Ex[(N_n - n)^2] = \Var[N_n]  = n.
\]
Therefore, 
\[
	\frac{\Ex \left[\left|S_j(\cC(\bar \cP_n, r_n)) - S_j (\cC(\cX_n, r_n)) \right| \right]}{n} \le \frac{c_4}{n^{1/2}} \to 0 \text{ as } n \to \infty.
\]
The theorem is proved.
\end{proof}

\section{Concluding remarks}

Together with the law of large numbers in \cite{ysa}, we have the following result. Assume that the support of $f$ is compact and convex and that 
\[
	0 < \inf_{x \in \supp f} \le \sup_{x \in \supp f} f(x) < \infty.
\]
Assume further that $f$ is Riemann integrable. 
Then for $1 \le k \le d - 1$,
\[
	\frac{\beta_k(\cC(\cX_n, r_n))}{n} \to \int_\R \hat \beta_k(f(x), r) dx \text{ a.s.~as } n \to \infty.
\]
A result for the Vietoris-Rips complex also holds.
%

\begin{tabular}{l}
Trinh Khanh Duy \\
Institute of Mathematics for Industry \\
Kyushu University\\
Fukuoka 819-0395, Japan \\
e-mail: trinh@imi.kyushu-u.ac.jp 
\end{tabular}

\end{document}